\newcommand{\val}{\operatorname{val}}
\newcommand{\vr}{\mathfrak O}
\newcommand{\invr}{\mathord{\in}\,\vr}
\newcommand{\maxi}{\mathfrak M}
\newcommand{\inmaxi}{\mathord{\in}\,\maxi}
\newcommand{\F}{\mathbb F}
\newcommand{\alg}{^{\mathrm{alg}}}
\newcommand{\autom}{^{\sigma}}
\newcommand{\starred}[1]{#1^*}
\newcommand{\str}[1]{\mathfrak{#1}}
\DeclareMathOperator{\diagram}{diag}
\newcommand{\diag}[1]{\diagram(#1)}
\newcommand{\proves}{\vdash}
\renewcommand{\vec}[1]{\bm{#1}}
\renewcommand{\phi}{\varphi}
\renewcommand{\theta}{\vartheta}
\renewcommand{\leq}{\leqslant}
\renewcommand{\geq}{\geqslant}
\renewcommand{\setminus}{\smallsetminus}
\newtheorem{theorem}{Theorem}
\newtheorem{lemma}[theorem]{Lemma}
\newtheorem{fact}[theorem]{Fact}
\newtheorem{porism}[theorem]{Porism}
\theoremstyle{definition}
\newcommand{\theory}[1]{\mathsf{#1}}
\newcommand{\fav}{\theory{FAV}}
\newcommand{\acfa}{\theory{ACFA}}
\newcommand{\dcf}{\theory{DCF}}
\newcommand{\acf}{\theory{ACF}}
\newcommand{\acvf}{\theory{ACVF}}
\newcommand{\df}{\theory{DF}}
\newcommand{\included}{\subseteq}
\newcommand{\lto}{\Rightarrow}
\newcommand{\liff}{\Leftrightarrow}
\newcommand{\Forall}[1]{\forall{#1}\;}
\newcommand{\Exists}[1]{\exists{#1}\;}
\newcommand{\sig}{\mathscr S}
\begin{document}
\title{Fields with automorphism and valuation}
\author{\"Ozlem Beyarslan \and Daniel Max Hoffmann
  \and G\"onen\c c Onay \and David Pierce}
\date{October 10, 2017}
\lowertitleback{\tableofcontents}

\maketitle


\setcounter{section}{0}
\section{Introduction}

The question has been of interest for decades:
If a model-complete theory $T$
is augmented with axioms for an automorphism $\sigma$ of its models,
does the resulting theory $T_{\sigma}$ have a model-companion?

It does have,
when $T$ is the theory $\acf$ of algebraically closed fields.
The model companion of $T_{\sigma}$ in this case is $\acfa$,
studied by Macintyre \cite{MR99c:03046}
and Chatzidakis and Hrushovski \cite{MR2000f:03109}
and others.
However, $T_{\sigma}$ is not companionable when $T$ is $\acfa$ itself
\cite{MR2138334} .

A more general result, established by Kikyo \cite{MR1791373},
is that if $T_{\sigma}$ is companionable, and $T$ is dependent,
then $T$ must also be stable.
In particular then, $T_{\sigma}$ cannot be companionable
when $T$ is the theory $\acvf$ of algebraically closed valued fields
in the signature of fields with a predicate for a valuation ring.
Note that an automorphism $\sigma$ of a valued field
induces an automorphism $\sigma_v$ of the the value group $\Gamma$
and an automorphism $\bar{\sigma}$ of the residue field.
Then $T_{\sigma}$ is companionable
when $T$ is the model companion of the theory
of any of the following classes of valued fields:
\begin{compactenum}[1)]
\item
valued $D$-fields \cite{MR2002d:03066},
\item
isometric valued difference fields, where $\sigma_v(\gamma)=\gamma$
for all $\gamma$ in $\Gamma$ \cite{MR2325101,MR2749570},
\item
  contractive valued difference fields,
  where $\sigma_v(\gamma)>n\gamma$ for all positive $\gamma$ in $\Gamma$
  and $n$ in $\upomega$ \cite{MR2725200},
\item multiplicative valued fields,
where $\sigma_v(\gamma)=\rho\gamma$ for all $\gamma$ in $\Gamma$,
for a certain constant $\rho$
\cite{MR2963021}.
\end{compactenum}
Moreover,
\begin{compactenum}[1)]\setcounter{enumi}{4}
\item\sloppy
  $T_{\sigma}\cup T_v$ is companionable
	when $T$ is $\acvf$ and
  $T_v$ is a companionable theory of ordered abelian groups
  equipped with an automorphism
   \cite{MR3397448} (in this case $(\Gamma,\sigma_v)\models T_v$).
	\end{compactenum}
The corresponding model companion in each of the five cases
satisfies an analogue
of Hensel's lemma for $\sigma$-polynomials
(see \cite[Definition 4.2]{MR2725200}).

In this paper we consider the theory $\fav$
of a valued field equipped with an automorphism of the field alone.
There is no required interaction of valuation and the automorphism:
the automorphism need not fix the valuation ring (setwise).
The similar case of a differential field with an automorphism of the field alone
was treated in \cite{MR2114160}.
Our main theorem,
Theorem \ref{thm:main},
is that $\fav$ has a model companion, $\fav^*$.

There is an obvious candidate for $\fav^*$,
since $\fav$ is included in the union of two model-complete theories,
namely $\acfa$ and $\acvf$.
However, we show, as Theorem \ref{thm:opt},
that $\acfa\cup\acvf$ is \emph{not} model complete.

Our paper is organized as follows.
In \S\ref{sect:fav} we give axioms of $\fav$.
In \S\ref{sect:mc}
preliminaries about companionable theories are explained.
Then in \S\ref{sect:df},
Theorem \ref{thm:acfa}
establishes a geometric axiomatization of $\acfa$.
Using this,
in \S\ref{sect:mod-comp}
we prove Theorems \ref{thm:main} and \ref{thm:opt}.

We thank the Nesin Mathematics Village
in \c Sirince, Sel\c c\"uk, \.Izmir, Turkey,
for hosting the workshop in July, 2016,
where these results were worked out;
and Rahim Moosa and Thomas Scanlon, for organizing the workshop;
and Moshe Kamensky and Piotr Kowalski, who also participated.

\section{Fields with an automorphism and a valuation}\label{sect:fav}

\begin{sloppypar}  
  A signature sufficient for a first-order axiomatization
  of fields with an automorphism and a valuation
  is the signature $\{+,-,\times,0,1\}$ of fields,
  augmented with
  \begin{compactenum}[1)]
  \item
    a singulary operator ${}\autom$ for the automorphism and
  \item
    a singulary predicate $\invr$ for membership in the valuation ring.
  \end{compactenum}
  We shall write the last two symbols \emph{after} their arguments.
  The fields with an automorphism and a valuation
  are then axiomatized by the field axioms, along with axioms
  \begin{align*}
    (x+y)\autom&=x\autom+y\autom,&
    (x\cdot y)\autom&=x\autom\cdot y\autom,&	
    \Exists yy\autom=x
  \end{align*}
  for a surjective endomorphism
  (which for a field \emph{is} an automorphism),
  and axioms
  \begin{gather*}
    0\in\vr,\\
    x\in\vr\land y\in\vr\lto -x\in\vr\land x+y\in\vr\land x\cdot y\in\vr,\\
    \Exists y(x\notin\vr\lto x\cdot y=1\land y\in\vr)
  \end{gather*}
  for a valuation ring.
  It will be our habit, as here, to suppress outer universal quantifiers.
  For convenience, we introduce a singulary predicate $\inmaxi$ 
  for membership in the unique maximal ideal of the valuation ring.
  This means requiring
  \begin{equation*}
    x   \in\maxi\liff\Exists y\bigl(x=0\lor(x\cdot y=1\land y\notin\vr)\bigr),
  \end{equation*}
  or equivalently
  \begin{equation}\label{eqn:notin-m}
    x\notin\maxi\liff\Exists y             (x\cdot y=1\land y   \in\vr).
  \end{equation}
  Because both the new predicate and its negation can thus be given existential definitions,
  use of the predicate does not affect the existence of a \emph{model-companion} of the theory being axiomatized
  \cite[Lem.~1.1, p.\ 427]{MR2505433}.
  Let us denote this theory by $\fav$;
  officially, its signature is
  \begin{equation*}
    \{+,-,\times,0,1,{}\autom,\invr,\inmaxi\}.
  \end{equation*}
  For later use, we note that the identity
  \begin{equation}\label{eqn:notin-o}
    x\notin\vr\liff\Exists y(x\cdot y=1\land y\in\maxi)
  \end{equation}
  holds in $\fav$.
\end{sloppypar}
For a valuation as such,
we can introduce a new sort having signature $\{+,0,\infty,>\}$,
so that the valuation is a \emph{surjective} function $\val$ from the original sort to the new sort
that satisfies also
\begin{gather}\label{eqn:val+}
	\val(x)+\val(y)=\val(x\cdot y),\\\notag
	0=\val(1),\\\notag
	\infty=\val(0),\\\notag
	\val(x)>\val(y)\liff\Exists z(y\cdot z=1\land x\cdot z\in\maxi).
\end{gather}
These rules ensure that the new sort is an ordered additive abelian group---%
the \emph{value group}---%
with an additional element $\infty$ that is greater than all others,
and
\begin{equation*}
  \infty+x=\infty=x+\infty.
\end{equation*}
Also, $\val$ restricts to
a homomorphism from the multiplicative group of units of the field 
onto the value group,
and the kernel of this homomorphism is $\vr\setminus\maxi$,
which is the group of units of the valuation ring $\vr$.
Moreover,
\begin{align*}
	\val(x)\geq0&\liff x\in\vr,\\
	\val(x)>0&\liff x\in\maxi, \\
	\val(x)<0 &\liff x\not\in\vr.
\end{align*}
As with the maximal ideal $\maxi$,
so with the value group,
its official status does not matter
for our purposes.
Officially we shall not use the value group,
and so we may write a typical model of $\fav$ as $(K,\sigma,\vr)$.
However, the value group may be useful for thinking things through.

\section{Model-companions and -completions}\label{sect:mc}

The \textbf{(Robinson) diagram} of a structure $\str A$ in a signature $\sig$
is the theory $\diag{\str A}$, in the signature $\sig(A)$ (where $A$ is the domain of $\str A$),
of structures in which $\str A$ embeds.
This means $\diag{\str A}$ is axiomatized by 
all of the quantifier-free sentences of $\sig$ 
with parameters from (the underlying set $A$ of) $\str A$
that are true in $\str A$.
Thus
$\diag{\str A}$ is also axiomatized simply 
by the atomic and negated atomic sentences
of $\sig(A)$ that are true in $\str A$.

When it exists, a \textbf{model-companion} of a theory $T_0$
is a theory $T_1$ in the same signature such that
\begin{compactenum}[1)]
\item
for each $i$,
every model of $T_i$ embeds in a model of $T_{1-i}$,
that is, $T_{1-i}\cup\diag{\str A}$ is consistent
whenever $\str A$ is a model of $T_i$; and
\item
$T_1$ is \textbf{model-complete,} that is,
$T_1\cup\diag{\str A}$ is complete whenever $\str A\models T_1$.
\end{compactenum}
The model-companion of a theory is unique when it exists.
It was ``introduced by Barwise, Eklof, Robinson, and Sabbagh in 1969''
\cite[p.\ 609]{MR91c:03026},
these four logicians being known collectively as Eli Bers
\cite[p.\ 410]{MR94e:03002}.
The model-companion generalizes an earlier notion of Robinson
\cite[\S4.3, p.\ 109]{MR0153570}:
the theory $T_1$ is a
\textbf{model-completion} of $T_0$
in case $T_0\included T_1$
and $T_1\cup\diag{\str A}$ is consistent and complete 
whenever $\str A\models T_0$.
If $T_0$ has the model-companion $T_1$,
then $T_1$ is a model-completion of $T_0$ just in case
$T_0$ has the \textbf{amalgamation property,}
that is, two models having a common submodel have a common supermodel
(this is an exercise in Hodges \cite[\S8.4, exer.\ 9, p.\ 390]{MR94e:03002}
attributed to Eli Bers \cite[Lem.\ 2.1, p.\ 254]{MR0277372}).

We say that a theory is \textbf{inductive}
if every union of a chain of models is a model.
Robinson's name for such a theory was \textbf{$\sigma$-persistent;}
but since we are already using the symbol $\sigma$ for a field automorphism,
we prefer the simpler term for the kind of theory in question.
By the Chang--\L o\'s--Suszko Theorem \cite[6.5.9, p.\ 297]{MR94e:03002},
A theory $T$ is inductive if and only if it is precisely the theory $T_{\forall\exists}$
axiomatized by the $\forall\exists$ (or $\forall_2$) consequences of $T$.

By a \textbf{system} we shall mean a (finite)
conjunction of atomic and negated atomic formulas.
For theories,
having a model-companion or -completion
means having an appropriate condition
for when systems over a given model
have solutions in a larger model.
We recall first Robinson's equivalent formulation
of when inductive theories have model-completions;
we review also the proof,
for the sake of the variations that we shall state and use.

\begin{theorem}[Robinson \protect{\cite[\S5.5, p.\ 128]{MR0153570}}]\label{thm:Rob}
For an inductive theory $T$ to have a model-completion,
a sufficient and necessary condition is that,
for every system $\phi(\vec x,\vec z)$
in the signature of $T$,
there is a quantifier-free formula $\theta(\vec x,\vec y)$ in that signature
such that, for all models $\str M$ of $T$,
for all tuples $\vec a$ of parameters from $M$
having the same length as $\vec x$,
the following conditions are equivalent:
\begin{compactenum}[(i)]
\item\label{item:i}
$\phi(\vec a,\vec z)$ 
is soluble in some model of $T\cup\diag{\str M}$,
\item\label{item:ii}
$\theta(\vec a,\vec y)$ is soluble in $\str M$ itself.
\end{compactenum}
When such $\theta$ do exist,
then the model-completion of $T$ 
is the theory $T^*$ axiomatized by the sentences
\begin{equation}\label{eqn:Rob}
\Exists{\vec y}\theta(\vec x,\vec y)\lto\Exists{\vec z}\phi(\vec x,\vec z),
\end{equation}
along with axioms for $T$ itself.
\end{theorem}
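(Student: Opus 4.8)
The plan is to prove Robinson's criterion by establishing the equivalence of conditions (\ref{item:i}) and (\ref{item:ii}) as the heart of the matter, and then verifying that the axiomatization \eqref{eqn:Rob} indeed produces the model-completion. I would treat sufficiency and necessity separately, since they use the hypotheses in opposite directions.

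For \textbf{necessity}, I would assume $T$ has a model-completion $T^*$ and extract the formula $\theta$. Given a system $\phi(\vec x,\vec z)$, the statement $\Exists{\vec z}\phi(\vec x,\vec z)$ is an existential formula. Over any model $\str M \models T$, solubility of $\phi(\vec a, \vec z)$ in some extension belonging to $T \cup \diag{\str M}$ is equivalent, by model-completeness of $T^*$ together with the fact that $\str M$ embeds in a model of $T^*$, to $T^* \cup \diag{\str M} \proves \Exists{\vec z}\phi(\vec a,\vec z)$. The key move is that model-completeness makes $T^*$ model-complete, so every formula is equivalent modulo $T^*$ to an existential one and, crucially, solubility passes between $\str M$ and its $T^*$-closure. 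A compactness argument should then show that this existential condition on $\vec a$ is captured, over models of $T$, by a single quantifier-free $\theta(\vec a, \vec y)$ being soluble in $\str M$: one collects the quantifier-free consequences and uses compactness to reduce an infinite disjunction to a finite one, which is again a system.

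For \textbf{sufficiency}, I would assume the $\theta$ exist and define $T^*$ by the axioms \eqref{eqn:Rob} together with $T$. The two things to check are that $T^*$ is a model-completion of $T$, namely that $T^* \cup \diag{\str M}$ is consistent and complete for $\str M \models T$. Completeness is where solubility-transfer does the work: if $\str N \models T^*$ and $\vec a$ comes from $\str M$, then any existential formula $\Exists{\vec z}\phi(\vec a,\vec z)$ holds in $\str N$ if and only if the antecedent $\Exists{\vec y}\theta(\vec a,\vec y)$ holds in $\str M$, because $\str N \models T^*$ forces the implication \eqref{eqn:Rob} one way while condition (\ref{item:i}) applied to $\str N$ itself gives the converse. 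Since every model of $T^*$ is a model of $T$, and existential formulas over $\str M$ are thus pinned down by quantifier-free data in $\str M$, a standard back-and-forth or elimination argument yields that $T^* \cup \diag{\str M}$ decides every sentence, giving completeness; consistency follows by embedding $\str M$ into a model where all the required $\phi$ become soluble. I would also verify that $T^*$ is inductive, so that the framework of the theorem applies to it.

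The \textbf{main obstacle} I expect is the compactness reduction in the necessity direction: passing from ``$\phi(\vec a,\vec z)$ is soluble in some model of $T \cup \diag{\str M}$'' to a \emph{single} quantifier-free $\theta$ uniform across all models of $T$. The subtlety is that the condition on $\vec a$ is a priori an infinite disjunction of quantifier-free conditions (the possible ``reasons'' $\vec a$ could have a solution), and one must argue, via compactness applied to $T$ together with the negation of solubility, that finitely many suffice and that their disjunction can be packaged as the existential closure of one system $\theta(\vec x, \vec y)$. Ensuring $\theta$ is genuinely a system (a conjunction of atomic and negated atomic formulas, with the auxiliary variables $\vec y$ absorbing the disjunction) rather than an arbitrary quantifier-free formula requires care, though any quantifier-free formula can be put in disjunctive normal form and the disjunction traded for an existential quantifier over a selector variable.
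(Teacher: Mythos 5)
Your proposal follows the paper's proof in structure: necessity by extracting $\theta$ from the completeness of $T^*\cup\diag{\str M}$ via Compactness and the Lemma on Constants, then a second application of Compactness to replace infinitely many candidate formulas by a finite disjunction; sufficiency by taking the sentences \eqref{eqn:Rob} together with $T$ as $T^*$, proving consistency of $T^*\cup\diag{\str M}$ by iterated extension, and proving completeness starting from the existential case. Your base case for completeness (an existential sentence with parameters from $M$ holds in a model of $T^*\cup\diag{\str M}$ exactly when the corresponding $\theta$ is soluble in $\str M$ itself) is exactly the paper's, and the obstacle you single out---making one $\theta$ work uniformly across all pairs $(\str M,\vec a)$---is indeed the delicate point, which the paper itself compresses into ``By Compactness again.''

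There is, however, one step whose stated justification would fail. In the necessity direction you claim that (i) is equivalent to $T^*\cup\diag{\str M}\proves\Exists{\vec z}\phi(\vec a,\vec z)$ ``by model-completeness of $T^*$ together with the fact that $\str M$ embeds in a model of $T^*$.'' Those two properties are exactly what make $T^*$ a model-\emph{companion}, and they do not suffice: model-completeness yields completeness of $T^*\cup\diag{\str N}$ only for $\str N\models T^*$, whereas $\str M$ is merely a model of $T$, so two models of $T^*$ extending $\str M$ may disagree about the solubility of $\phi(\vec a,\vec z)$. The paper's own subject matter gives a counterexample: $\acfa$ is the model-companion of the theory of difference fields, which lacks amalgamation, and over $(\mathbb Q,\mathrm{id})$ the system $z^2=2\land z\autom=-z$ is soluble in some difference-field extension, yet fails in every model of $\acfa$ extending $(\mathbb Q(\sqrt2),\mathrm{id})$. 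What the argument needs is the defining property of a model-\emph{completion}---that $T^*\cup\diag{\str M}$ is consistent and complete for every $\str M\models T$---which is available only because that is precisely the hypothesis being assumed; this is what the paper invokes. Relatedly, your ``standard back-and-forth or elimination argument'' for full completeness glosses the place where the paper does explicit work: an induction on $\exists_n$ complexity whose inductive step passes through the diagram of the larger model $\str N$ and then reapplies existential completeness over $\str M$. Both issues are repairable (cite the right hypothesis; write out the induction), but as written the necessity argument rests on facts that do not imply what you need.
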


\begin{sloppypar}
  The sentence \eqref{eqn:Rob} is equivalent to the $\forall\exists$
  sentence $\Exists{\vec z}\bigl(\theta(\vec x,\vec y)\lto\phi(\vec x,\vec z)\bigr)$,
  outer universal quantifiers being suppressed.
\end{sloppypar}

\begin{proof}[Proof of Robinson's theorem.]
For the necessity of the given condition,
suppose $T$ has the model-completion $T^*$.
For every system $\phi(\vec x,\vec z)$ in the signature of $T$,
for every model $\str M$ of $T$,
for every tuple $\vec a$ of parameters from $M$
such that \eqref{item:i} holds,
since every model of $T$ embeds in a model of $T^*$,
we can conclude from the completeness of $T^*\cup\diag{\str M}$
that
\begin{equation*}
T^*\cup\diag{\str M}\proves\Exists{\vec z}\phi(\vec a,\vec z).
\end{equation*}
By Compactness and the Lemma on Constants \cite[2.3.2, p.\ 43]{MR94e:03002},
there is a quantifier-free formula $\theta_{(\str M,\vec a)}(\vec x,\vec y)$
of the signature of $T$ such that
\begin{equation}\label{eqn:T^*}
T^*\proves\Exists{\vec y}\theta_{(\str M,\vec a)}(\vec x,\vec y)\lto\Exists{\vec z}\phi(\vec x,\vec z).
\end{equation}
By Compactness again,
for the given system $\phi$,
there is a disjunction $\theta$ of finitely many of the formulas $\theta_{(\str M,\vec a)}$
such that
for every model $\str M$ of $T$,
for every tuple $\vec a$ of parameters from $M$
having the length of $\vec x$,
if \eqref{item:i}, then \eqref{item:ii}.
If conversely \eqref{item:ii},
then $\phi(\vec a,\vec z)$ is soluble in every model of $T^*\cup\diag{\str M}$,
by \eqref{eqn:T^*};
but such a model is a model of $T\cup\diag{\str M}$,
and so \eqref{item:i} holds.

For the sufficiency of Robinson's condition,
we first show that every model $\str M$ of $T$ 
embeds in a model of the theory $T^*$ having the axioms \eqref{eqn:Rob}
in addition to those of $T$.
Here we shall use \eqref{item:ii} implies \eqref{item:i},
but not the converse.
\begin{inparadesc}
\item[For every] system $\phi(\vec x,\vec z)$ in the signature of $T$,
\item[for all] $\vec a$ and $\vec b$ from $M$ such that $\str M\models\theta(\vec a,\vec b)$,
\item[for some] model $\str N$ of $T\cup\diag{\str M}$,
\end{inparadesc}
the sentence
\begin{equation*}
\Exists{\vec z}\phi(\vec a,\vec z)
\end{equation*}
is true in $\str N$.
This sentence being existential
and thus preserved in larger models,
by Zorn's Lemma and inductivity of $T$, 
we can move the last of the three bold quantifiers to the front:
in some model $\str M'$ of $T\cup\diag{\str M}$,
for all systems $\phi(\vec x,\vec z)$,
for all $\vec a$ and $\vec b$ from $M$,
the sentence
\begin{equation*}
\Exists{\vec z}\bigl(\theta(\vec a,\vec b)\lto\phi(\vec a,\vec z)\bigr).
\end{equation*}
is true in $\str N$.
Now we can form the chain
\begin{equation*}
\str M\included\str M'\included\str M''\included\cdots,
\end{equation*}
whose limit is a model of $T^*$.
Thus $T^*\cup\diag{\str M}$ is consistent.

We now show $T^*\cup\diag{\str M}$ is complete
by induction on the complexity of sentences.
The theory is complete with respect to existential sentences,
namely $\exists_1$ sentences,
since \eqref{item:i} implies \eqref{item:ii}.
Indeed, suppose the sentence $\Exists{\vec z}\phi(\vec a,\vec z)$ 
is true in some model of $T^*\cup\diag{\str M}$,
where $\phi$ is quantifier-free in the signature of $T$,
and $\vec a$ is from $M$.
Since $\phi$ is a disjunction of systems,
it is enough to assume $\phi$ itself is a system.
Since $T\included T^*$,
we have \eqref{item:i}
and therefore \eqref{item:ii}.
Since the formula $\theta$ here is quantifier-free,
the sentence $\theta(\vec a,\vec b)$ belongs to $\diag{\str M}$
for some $\vec b$ from $M$.
Since the sentence \eqref{eqn:Rob} is an axiom of $T^*$,
we conclude
\begin{equation*}
T^*\cup\diag{\str M}\proves\Exists{\vec y}\phi(\vec a,\vec y).
\end{equation*}
Thus $T^*\cup\diag{\str M}$
is complete with respect to $\exists_1$ sentences.

Suppose now that 
for some positive integer $n$,
for all models $\str M$ of $T$,
the theory $T^*\cup\diag{\str M}$ 
is complete with respect to $\exists_n$ sentences.
For an arbitrary model $\str M$ of $T$,
let $\phi(\vec x,\vec z)$ be an $\forall_n$ formula,
and let $\vec a$ be a tuple of parameters from $M$
such that the $\exists_{n+1}$ sentence $\Exists{\vec z}\phi(\vec a,\vec z)$
is true in some model $\str N$ of $T^*\cup\diag{\str M}$.
Then for some $\vec c$ from $N$,
the sentence $\phi(\vec a,\vec c)$ is true in $\str N$.
Since $\str N\models T$,
by inductive hypothesis we have
\begin{equation*}
T^*\cup\diag{\str N}\proves\Exists{\vec z}\phi(\vec a,\vec z).
\end{equation*}
By Compactness,
there is a quantifier-free formula $\psi(\vec x,\vec y)$
such that
\begin{align*}
\str N&\models\Exists{\vec y}\psi(\vec a,\vec y),&
T^*&\proves\Exists{\vec y}\psi(\vec x,\vec y)\lto\Exists{\vec z}\phi(\vec x,\vec z).	
\end{align*}
Since again $\str N$ is a model of $T^*\cup\diag{\str M}$,
which is complete with respect to existential sentences,
we can conclude
\begin{equation*}
T^*\cup\diag{\str M}\proves\Exists{\vec z}\phi(\vec a,\vec z).
\end{equation*}
Thus $T^*\cup\diag{\str M}$ is complete with respect to $\exists_{n+1}$ sentences.

By induction, $T^*\cup\diag{\str M}$ is complete.
\end{proof}

A model $\str M$ of a theory $T$ is \textbf{existentially closed}
if $T\cup\diag{\str M}$ is complete with respect to existential formulas.
For Theorem \ref{thm:Rob} then,
the proof of completeness of $T^*\cup\diag{\str M}$
is a generalization of the proof of the following.

\begin{porism}[Robinson's Test \protect{\cite[4.2.1, p.\ 92]{MR0153570}}]
A theory $T$ is model-complete,
provided that all of its models are existentially closed.
\end{porism}

For an inductive theory $T$
with a model $\str M$,
\begin{inparadesc}
\item[for every] system $\phi(\vec x,\vec z)$ in the signature of $T$,
\item[for all] $\vec a$ from $M$,
\item[for some] model $\str N$ of $T\cup\diag{\str M}$,
\end{inparadesc}
if $T\cup\diag{\str M}\cup\{\Exists{\vec z}\phi(\vec a,\vec z)\}$
is consistent, then $\phi(\vec a,\vec z)$ is solved in $\str N$.
By the method of the proof that every model of $T$
embeds in a model of $T^*$,
we can again put the last bold quantifier in front
and go on to obtain the following.

\begin{porism}
Every model of an inductive theory
embeds in an existentially closed model.
\end{porism}

The two porisms lead to the following result,
now standard \cite[3.5.15, p.\ 198]{MR91c:03026}.

\begin{theorem}[Eklof and Sabbagh \protect{\cite[7.10--3, pp.\ 286--8]{MR0277372}}]
An inductive theory $T$ has a model companion $T^*$
if and only if the models of $T^*$
are precisely the existentially closed models of $T$.
\end{theorem}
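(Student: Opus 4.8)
The plan is to prove both directions of the biconditional using the two porisms together with Robinson's Test. For the ``if'' direction, suppose $T^*$ is a theory whose models are exactly the existentially closed models of $T$, and let me show that $T^*$ is a model companion of $T$. The mutual embedding condition is immediate: every model of $T$ embeds in an existentially closed one by the porism that every model of an inductive theory so embeds, and such a model is a model of $T^*$; conversely every model of $T^*$ is an existentially closed model of $T$, hence in particular a model of $T$, and embeds in itself. It remains to show that $T^*$ is model-complete, and here I apply Robinson's Test to $T^*$, so I must check that each model of $T^*$ is existentially closed \emph{as a model of} $T^*$. Let $\str M\models T^*$ and $\str M\included\str N\models T^*$. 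Since models of $T^*$ are models of $T$ we have $\str N\models T$, and since $\str M$ is existentially closed in $T$ with $\str M\included\str N$, every existential sentence with parameters from $M$ true in $\str N$ is already true in $\str M$. As every model of $T^*\cup\diag{\str M}$ is such an $\str N$, the theory $T^*\cup\diag{\str M}$ is complete with respect to existential sentences, so $\str M$ is existentially closed in $T^*$; Robinson's Test then yields model-completeness.

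For the ``only if'' direction, suppose $T^*$ is a model companion of $T$, and consider first the inclusion showing that every model $\str M$ of $T^*$ is an existentially closed model of $T$. The preliminary step, which I expect to be the main obstacle, is to see that $\str M\models T$ at all, since this is not part of the definition of a model companion. Here I build an interleaved chain $\str M\included\str N_0\included\str M_1\included\str N_1\included\cdots$, using the embedding condition repeatedly, where $\str M=\str M_0$, each $\str N_i\models T$, and each $\str M_i\models T^*$. Because $T^*$ is model-complete, each inclusion $\str M_i\included\str M_{i+1}$ is elementary, so $\str M\preceq\bigcup_i\str M_i$; because $T$ is inductive and this union coincides with $\bigcup_i\str N_i$, the union is a model of $T$, whence $\str M\models T$. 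Existential closure of $\str M$ in $T$ is then routine: given $\str M\included\str N\models T$, I embed $\str N$ into some $\str M'\models T^*$, note $\str M\preceq\str M'$ by model-completeness, and reflect any existential sentence over $M$ up from $\str N$ to $\str M'$ and then down to $\str M$.

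For the reverse inclusion I show that every existentially closed model $\str M$ of $T$ is a model of $T^*$. Rather than a sandwich or back-and-forth argument, the efficient route is to exploit that a model-complete theory is inductive, since chains of its models are elementary chains, and hence, by the Chang--\L o\'s--Suszko Theorem, is axiomatized by $\forall\exists$ sentences $\Forall{\vec x}\Exists{\vec y}\beta(\vec x,\vec y)$ with $\beta$ quantifier-free. Given such an axiom and a tuple $\vec a$ from $M$, I embed $\str M$ into a model $\str M'\models T^*$; then $\str M'$, which is a model of $T$ by the previous paragraph, satisfies $\Exists{\vec y}\beta(\vec a,\vec y)$, and this existential sentence reflects down to $\str M$ by existential closedness. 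Hence $\str M$ satisfies every axiom of $T^*$, that is, $\str M\models T^*$. The two inclusions together show that the models of $T^*$ are precisely the existentially closed models of $T$. Everything outside the interleaved-chain step is a direct application of the porisms, of model-completeness, and of the $\forall\exists$ normal form, so I expect that chain construction to be the only genuinely delicate point.
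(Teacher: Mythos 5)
Your proof is correct, and it follows exactly the route the paper indicates: the paper itself offers no written proof, saying only that ``the two porisms lead to'' the result and citing the literature, and your argument is a sound filling-in of that outline --- the embedding porism plus Robinson's Test for the ``if'' direction, and the standard interleaved-chain argument (with Tarski's elementary chain theorem, inductivity, and Chang--\L o\'s--Suszko) for the ``only if'' direction. All steps check out, including the genuinely delicate point you flag, namely showing that a model of the model companion is a model of $T$ at all.
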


Robinson used Theorem \ref{thm:Rob}
to prove that the theory $\df_0$ of fields of characteristic $0$
with a derivation had a model-completion, $\dcf_0$.
But there are simpler practical approaches to obtaining model-completions;
simpler still, if all we want are model-companions.
First of all,
in the proof of Theorem \ref{thm:Rob},
we did not really need to extract the finite disjunction $\theta$
from all of the formulas $\theta_{(\str M,\vec a)}$.
Moreover, the proof that models of $T$ embed in models of $T^*$
did not require the formulas $\theta$ to be quantifier-free.
Neither is this required for the observation that the models of $T^*$
are the existentially closed models of $T$.
Thus we have the following.

\begin{porism}\label{por:Rob}\sloppy
For an inductive theory $T$ to have a model-completion,
a sufficient and necessary condition is that,
for every system $\phi(\vec x,\vec z)$
in the signature of $T$,
there is a set $\Theta$
of quantifier-free formulas $\theta(\vec x,\vec y)$ in that signature
such that, for all models $\str M$ of $T$,
for all tuples $\vec a$ of parameters from $M$
having the same length as $\vec x$,
the following conditions are equivalent:
\begin{compactenum}[(i)]
\item
$\phi(\vec a,\vec z)$ 
is soluble in some model of $T\cup\diag{\str M}$,
\item
$\theta(\vec a,\vec y)$ is soluble in $\str M$ itself
for some $\theta$ in $\Theta$.
\end{compactenum}
When such $\Theta$ do exist,
then the model-completion of $T$ 
is the theory $T^*$ axiomatized by the sentences \eqref{eqn:Rob},
where $\theta$ ranges over $\Theta$,
along with axioms for $T$ itself.
If the formulas in the sets $\Theta$ are not necessarily quantifier-free,
the theory $T^*$ is still the model-companion of $T$.
\end{porism}

\begin{sloppypar}
  Simpler axiomatizations than Robinson's for $\dcf_0$ 
  were found by showing that the axioms need not explicitly concern all systems
  \cite{MR0491149,MR99g:12006}.
  The general observation can be formulated as follows.
\end{sloppypar}

\begin{porism}\label{por:main}
Theorem \ref{thm:Rob} and its Porism \ref{por:Rob} still hold,
even if $\phi$ is constrained to range over a collection of systems containing,
\begin{compactenum}[1)]
\item
for each system $\psi(\vec x,\vec u)$ in the signature of $T$,
\item
for each model $\str M$ of $T$,
\item
for each tuple $\vec a$ of parameters from $M$,
\end{compactenum}
a system $\phi(\vec x,\vec u,\vec v)$
such that,
if $\Exists{\vec u}\psi(\vec a,\vec u)$
 is consistent with $T\cup\diag{\str M}$,
 then so is
 $\Exists{\vec u}\Exists{\vec v}\phi(\vec a,\vec u,\vec v)$,
 and
\begin{equation}\label{eqn:T-cup}
    T\cup\diag{\str M}\proves\phi(\vec a,\vec u,\vec v)\lto\psi(\vec a,\vec u).
\end{equation}
\end{porism}

We may refer to $\phi(\vec x,\vec u,\vec v)$
as a \textbf{refinement} of the system $\psi(\vec x,\vec u)$.
We shall apply Porism \ref{por:main} when $T$ is $\fav$
or, as in the next section, the theory of \emph{difference fields.}

\section{Difference fields}\label{sect:df}

A \textbf{difference field} is a field equipped with an automorphism.
As was observed in the introduction,
the theory of difference fields
in the signature $\{+,-,\times,0,1,{}\autom\}$
has a model-compan\-ion,
called $\acfa$.
Perhaps any axiomatization of $\acfa$ can be made to serve present purposes;
we shall derive the one that we shall use from Theorem \ref{thm:acfa}.
which is in the style of \cite[Thm~3.1, p.~1337]{MR2114160}.

The following lemma will be the reason for condition \eqref{eqn:I_0}
in Theorem \ref{thm:acfa}.
For notational economy,
our set $\upomega$ of natural numbers
is the set of von Neumann natural numbers,
where
\begin{equation*}
n=\{0,\dots,n-1\}=\{i\colon i<n\}.
\end{equation*}

\begin{fact}\label{fact:acfa}
  Suppose $(K,\sigma)$ is a difference field,
  %
and $I$ is a prime ideal of the polynomial ring 
$K[X_j\colon j<n]$,
and $\tau$ is an embedding of $m$ in $n$.
Write $X_j+I$ as $a_j$ whenever $j<n$.
For $\sigma$ to extend
to an automorphism of a field that includes $K[\vec a]$
so that $a_i{}\autom=a_{\tau(i)}$ whenever $i<m$,
it is necessary and sufficient that
\begin{equation*}
f(a_i\colon i<m)=0\iff f(a_{\tau(i)}\colon i<m)=0
\end{equation*}
for all $f$ in $K[X_i\colon i<m]$.
\end{fact}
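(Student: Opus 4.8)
The plan is to read the stated biconditional as the precise obstruction to extending the partial prescription ``$\sigma$ on $K$, together with $a_i\mapsto a_{\tau(i)}$ for $i<m$'' to a field homomorphism, and then to upgrade such a homomorphism to a genuine automorphism inside a large algebraically closed field. Throughout I write $f\autom$ for the polynomial obtained from $f$ by applying $\sigma$ to its coefficients; since $\sigma$ is an automorphism of $K$, the map $f\mapsto f\autom$ is a ring automorphism of $K[X_i\colon i<m]$, and any $\hat\sigma$ extending $\sigma$ must send $f(a_i\colon i<m)$ to $f\autom(a_{\tau(i)}\colon i<m)$ --- so the displayed equivalence is to be understood with the coefficients of $f$ carried along by $\sigma$ on the right-hand side.

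For necessity I would simply apply this identity: if $\hat\sigma$ is an automorphism of a field containing $K[\vec a]$ that extends $\sigma$ and sends each $a_i$ (for $i<m$) to $a_{\tau(i)}$, then $\hat\sigma\bigl(f(a_i\colon i<m)\bigr)=f\autom(a_{\tau(i)}\colon i<m)$, and injectivity of $\hat\sigma$ converts vanishing of the left side into vanishing of the right.

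For sufficiency the first step is to manufacture an embedding. As $I$ is prime, $K[\vec a]$ is a domain with fraction field $K(\vec a)$. I would consider the ring homomorphism $\Psi\colon K[X_i\colon i<m]\to K(\vec a)$ given by $\Psi(f)=f\autom(a_{\tau(i)}\colon i<m)$; it extends $\sigma$ on $K$ and sends $X_i$ to $a_{\tau(i)}$. Its kernel is $\{f\colon f\autom(a_{\tau(i)}\colon i<m)=0\}$, which by the hypothesis is exactly the prime ideal $\mathfrak p=\{f\colon f(a_i\colon i<m)=0\}$. Hence $\Psi$ factors through an injection of $K[X_i\colon i<m]/\mathfrak p\cong K[a_i\colon i<m]$ into the field $K(\vec a)$, and passing to fraction fields yields a field embedding $\phi\colon K(a_i\colon i<m)\to K(\vec a)$ that extends $\sigma$ and satisfies $\phi(a_i)=a_{\tau(i)}$.

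It remains to turn $\phi$ into an automorphism of some field containing $K[\vec a]$, and here the latitude in the statement helps: we may choose the ambient field freely and need not control $\hat\sigma$ on $a_m,\dots,a_{n-1}$. I would embed $K(\vec a)$ into a sufficiently saturated algebraically closed field $\Omega$, that is, a large model of $\acf$. The isomorphism $\phi$ from $K(a_i\colon i<m)$ onto its image $K(a_{\tau(i)}\colon i<m)$ extends to an isomorphism of their algebraic closures inside $\Omega$, and then, by matching transcendence bases of $\Omega$ over these two algebraically closed subfields --- of equal cardinality because $\Omega$ is large --- it extends to an automorphism $\hat\sigma$ of $\Omega$, which has all the required properties. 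The two kernel identifications are routine; the step needing genuine care is this last one, where one must produce a \emph{surjective} automorphism rather than a mere embedding, which is precisely why I pass to a large algebraically closed $\Omega$ and invoke its homogeneity instead of trying to stay inside $K(\vec a)$.
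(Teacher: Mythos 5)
Your proof is correct, and there is nothing in the paper to compare it against line by line: the paper states this as a \emph{Fact} with no proof, treating it as background from difference algebra, and invokes it only inside the proof of Theorem \ref{thm:acfa}. What the paper does assert there is exactly the two steps you carry out: extend $\sigma$ to an isomorphism from $K(a_i\colon i<m)$ onto $K(a_{\tau(i)}\colon i<m)$, and then extend that to an automorphism of a field including $K(\vec a)$. Your factoring of the twisted evaluation map through the prime ideal supplies the first step, and your transcendence-basis argument in a large algebraically closed field $\Omega$ supplies the second; both are sound.

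The most valuable part of your write-up is the preliminary remark about coefficients, and you should know it is not merely an interpretive convention: read literally, with the coefficients of $f$ untouched on the right-hand side, the displayed equivalence is neither necessary nor sufficient. Take $K=\mathbb{Q}(t_j\colon j\in\mathbb{Z})$ with $\sigma(t_j)=t_{j+1}$. With $m=n=1$, $\tau$ the identity, and $I=(X_0-t_0)$, the untwisted condition holds trivially, yet no extension of $\sigma$ can fix $a_0=t_0$; with $m=1$, $n=2$, $\tau(0)=1$, and $I=(X_0-t_0,\,X_1-t_1)$, the extension exists (namely $\sigma$ itself), yet the untwisted condition fails at $f=X_0-t_0$. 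So the Fact is true only under your reading, where the right-hand side carries $f^{\sigma}$, the polynomial obtained by applying $\sigma$ to the coefficients; the same twist is tacitly needed in condition \eqref{eqn:I_0} of Theorem \ref{thm:acfa}. With that reading, necessity is immediate from injectivity of $\hat\sigma$ together with $\hat\sigma\bigl(f(a_i\colon i<m)\bigr)=f^{\sigma}(a_{\tau(i)}\colon i<m)$, and your sufficiency argument---the kernel identification, passage to fraction fields, isomorphism of algebraic closures, and matching of transcendence bases of equal infinite cardinality---is complete.
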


\begin{theorem}\label{thm:acfa}
A difference-field $(K,\sigma)$
is existentially closed among all difference fields
if and only if,
\begin{compactenum}[1)]
\item
for all $m$ and $n$ in $\upomega$ such that $m\leq n$,
\item
for every injective function $\tau$ from $m$ into $n$,
\item
for every finite subset $I_0$ of $K[X_j\colon j<n]$,
\end{compactenum}
if $I_0$ generates a prime ideal $(I_0)$ of $K[X_j\colon j<n]$,
and
\begin{multline}\label{eqn:I_0}
\bigl\{f(X_{\tau(i)}\colon i<m)\colon f\in(I_0)\cap K[X_i\colon i<m]\bigr\}\\
=(I_0)\cap K[X_{\tau(i)}\colon i<m],
\end{multline}
then the system
\begin{equation}\label{eqn:df-sys}
\bigwedge_{f\in I_0}f=0\land\bigwedge_{i<m}X_i{}\autom=X_{\tau(i)}
\end{equation}
has a solution in $K$
(the case $m=0$ ensures that $K$ is algebraically closed).
\end{theorem}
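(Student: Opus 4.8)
The plan is to prove both implications through Fact \ref{fact:acfa}, which is exactly the bridge between prime ideals of $K[X_j\colon j<n]$ and difference-field extensions of $(K,\sigma)$. Write $I=(I_0)$ and $a_j=X_j+I$. The first thing to observe is that condition \eqref{eqn:I_0} is merely a restatement of the criterion of Fact \ref{fact:acfa}: under the $K$-algebra isomorphism $X_i\mapsto X_{\tau(i)}$ from $K[X_i\colon i<m]$ to $K[X_{\tau(i)}\colon i<m]$, condition \eqref{eqn:I_0} says precisely that the contraction $I\cap K[X_i\colon i<m]$ is carried \emph{onto} $I\cap K[X_{\tau(i)}\colon i<m]$, which unwinds to the requirement that $f(a_i\colon i<m)=0$ if and only if $f(a_{\tau(i)}\colon i<m)=0$ for every $f$. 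Thus \eqref{eqn:I_0} holds exactly when $\sigma$ admits the extension sought in Fact \ref{fact:acfa}.

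For necessity, assume $(K,\sigma)$ is existentially closed and that $m$, $n$, $\tau$, and $I_0$ are given with $I=(I_0)$ prime and \eqref{eqn:I_0} holding. Being prime, $I$ is the kernel of a homomorphism from $K[X_j\colon j<n]$ into a field, in which the images $a_j$ satisfy $\bigwedge_{f\in I_0}f=0$. By the observation above with Fact \ref{fact:acfa}, $\sigma$ extends to an automorphism of a field $L$ containing $K[\vec a]$ with $a_i\autom=a_{\tau(i)}$ for $i<m$. So $(L,\sigma)$ is a difference-field extension of $(K,\sigma)$ in which $\vec a$ solves \eqref{eqn:df-sys}; as \eqref{eqn:df-sys} is a system with parameters in $K$, existential closedness returns a solution in $K$. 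The parenthetical case $m=0$ is just the demand that every finite system of polynomial equations over $K$ with prime (hence proper) relation ideal be soluble in $K$, which forces $K$ to be algebraically closed.

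For sufficiency I would verify existential closedness directly: any system $\Phi$ over $(K,\sigma)$ soluble in some difference-field extension must be soluble in $K$. First reduce $\Phi$ to the form \eqref{eqn:df-sys}. Each inequation $g\ne0$ is cleared by a fresh variable $w$ and the equation $g\cdot w=1$; and, for each original variable $x_l$, the iterates up to the highest power $\sigma^d$ occurring in $\Phi$ are replaced by new variables $X_{l,0},\dots,X_{l,d}$, turning the difference-polynomial equations into ordinary polynomial equations and recording the difference structure as the constraints $X_{l,j}\autom=X_{l,j+1}$ for $j<d$. After reindexing so the constrained variables come first, these take exactly the shape $X_i\autom=X_{\tau(i)}$ for the injection $\tau\colon(l,j)\mapsto(l,j+1)$, and since fresh variables were added we have $m<n$. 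Now let $\vec a$ solve the reduced system in an extension $(L,\sigma)$. The ideal of all polynomials vanishing at $\vec a$ is prime (the kernel of an evaluation map into the field $L$), finitely generated by Hilbert's basis theorem, say by $I_0$, and satisfies \eqref{eqn:I_0} by Fact \ref{fact:acfa}, because $\sigma$ is defined throughout $L$ with $a_i\autom=a_{\tau(i)}$. The hypothesised condition then produces a solution in $K$, whose components at $j=0$ solve the original $\Phi$.

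The main obstacle is the bookkeeping in this last reduction rather than any deep step: one must check that the iterate-variable encoding reflects solubility faithfully in both directions, so that a solution of the reduced system in $K$ yields a solution of $\Phi$ in $K$; that adjoining the Rabinowitsch and iterate variables leaves the ideal of relations prime; and that the indices can be arranged so that $\tau$ really is an injection of $m$ into $n$ with $m\leq n$. All the genuine content---that compatibility of $\sigma$ with the ideal is equivalent to \eqref{eqn:I_0}---has been isolated in Fact \ref{fact:acfa}, so once the reduction is set up correctly both implications are short.
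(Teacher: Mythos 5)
Your proof is correct and takes essentially the same route as the paper's: one direction applies Fact \ref{fact:acfa} to a generic zero of $(I_0)$ to build a difference-field extension solving \eqref{eqn:df-sys} and then invokes existential closedness, while the other reduces an arbitrary system to the form \eqref{eqn:df-sys} (new variables for $\sigma$-images, Rabinowitsch variables for inequations, an injective $\tau$) and replaces $I_0$ by generators of the full---automatically prime---vanishing ideal of a solution in the extension, for which \eqref{eqn:I_0} holds because $\sigma$ is already defined there. The ``bookkeeping'' you flag as the remaining obstacle is precisely what the paper's proof carries out, and your outline of it is sound.
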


\begin{proof}
We refine an arbitrary system of difference equations and inequations as follows.
Over a difference field $(K,\sigma)$,
suppose a system 
has a solution $(a_i\colon i<k)$ from some larger model.
Whenever $i<j<k$,
we may assume that $a_i\neq a_j$
and that the system has the inequation $X_i\neq X_j$
as one of its conjuncts.
We obtain a refinement having the form \eqref{eqn:df-sys}
as follows.
\begin{compactenum}
\item
In non-constant terms,
repeatly make the replacements
\begin{equation*}
\begin{array}{r*3{r}}
  \text{of}&       (t+u)\autom,&(-t)\autom,&(t      \cdot u)\autom\\
\text{with}&t\autom+  u \autom,&  -t\autom,& t\autom\cdot u \autom
\end{array}
\end{equation*}
respectively,
until $\sigma$ is applied only to variables and constant terms.
\item
For every atomic or negated atomic formula $\phi$
of the system that is not of the form $X\autom=Y$,
but in which $X\autom$ appears as an argument,
replace that argument with a new variable $Y$,
and introduce the new equation $X\autom=Y$.
\item
If for some $i$ less than $k$,
there is not already an equation of the form $X_i{}\autom=Y$,
then introduce such an equation,
$Y$ being a new variable.
\item
Replace any polynomial inequation $f\neq g$
with
\begin{equation*}
(f-g)\cdot X=1,
\end{equation*}
where $X$ is a new variable.
\end{compactenum}
After indexing the new variables appropriately,
we have that
\begin{compactenum}[1)]
\item
for some $m$ and $n$ in $\upomega$ such that $m\leq n$,
\item
for some function $\tau$ from $m$ into $n$,
\item
for some finite subset $I_0$ of $K[X_j\colon j<n]$,
\end{compactenum}
our system has the form of \eqref{eqn:df-sys}.
(It may be that some of the hidden parameters
are part of compound terms that involve $\sigma$;
but such terms can just be understood as standing for 
the appropriate elements of $K$.)
If $\tau$ is not injective,
then the system must have equations
$X_i{}\autom=X_{\ell}$ and $X_j{}\autom=X_{\ell}$,
where $k\leq i<j<m$;
but these equations imply $X_i=X_j$,
and so we can replace $X_j$ throughout with $X_i$.
Thus we may assume $\tau$ is injective.
In case the ideal generated by $I_0$ is not prime,
still,
for some $(a_i\colon k\leq i<n)$ in the larger difference field,
the new system has the solution $(a_i\colon i<n)$,
and we can then add enough equations $f(X_j\colon j<n)=0$
that are satisfied by $(a_i\colon i<n)$
so that $I_0$ becomes a set of generators of a prime ideal $\mathfrak P$,
and $(a_i\colon i<n)$ is a generic point over $K$ of the zero-set of $\mathfrak P$.
In this case \eqref{eqn:I_0} is satisfied.
For every solution $(b_i\colon i<n)$ of the latest system,
$(b_i\colon i<k)$ solves the original system.
Thus if $(K,\sigma)$ meets the given conditions,
it is existentially closed as a difference field.

Conversely,
under the given conditions, 
every system of the form \eqref{eqn:df-sys}
is indeed consistent with $(K,\sigma)$,
by Fact \ref{fact:acfa}:
if $\vec a$ is a generic zero of $I_0$,
we can extend $\sigma$ to an isomorphism from $K(a_i\colon i<m)$
to $K(a_{\tau(i)}\colon i<m)$,
and then to an automorphism of a field including $K(\vec a)$.
In this way, $\vec a$ solves \eqref{eqn:df-sys},
so this system must have a solution in $K$,
if $(K,\sigma)$ is existentially closed as a difference field.
\end{proof}

If we did not already know that $\acfa$ existed,
the foregoing theorem would prove it by Porism \ref{por:main},
since the conditions that \eqref{eqn:df-sys} must satisfy are first-order.
This is so,
because of the existence of appropriate bounds on degrees of polynomials,
as established in \cite{MR739626}.
In particular,
for all $n$ and $r$ in $\upomega$,
there are bounds $s$ and $t$ in $\upomega$
such that, 
for all fields $K$,
for all $m$ in $\upomega$,
for every ideal $I$ of $K[X_j\colon j<n]$
generated by a set $\{f_i\colon i<m\}$,
each $f_i$ having degree $r$ or less,
\begin{compactenum}[1)]
\item
the primeness of the ideal can be established by showing
\begin{equation*}
gh\in I\And g\notin I
\implies h\in I
\end{equation*}
for all polynomials $g$ and $h$ in $K[X_j\colon j<n]$
having degree $s$ or less, and
\item
membership in $I$ by polynomials like $gh$ having degree $2s$ or less
is established by polynomials of degree $t$ or less,
in the sense that,
if indeed $gh\in I$,
then $gh=\sum_{i<m}g_i\cdot f_i$
for some $g_i$ having degree $t$ or less.
\end{compactenum}
Because $\acf$ admits full elimination of quantifiers,
$\acfa$ is  the model-\emph{completion} 
of the theory of difference fields
that are algebraically closed as fields (compare to the last sentence in Porism \ref{por:Rob}).

\section{A model-completion}\label{sect:mod-comp}

We now consider the class of models $(K,\sigma,\vr)$ of $\fav$
such that
\begin{equation*}
  \Exists xx\notin\vr
\end{equation*}
and,
\begin{compactenum}[1)]
\item
for all $m$ and $n$ in $\upomega$ such that $m\leq n$,
\item
for every injective function $\tau$ from $m$ into $n$,
\item
for every finite subset $I_0$ of $\vr[X_j\colon j<n]$,
\item
for all subsets $\lambda$ of $n$ and $\kappa$ of $\lambda$,
\end{compactenum}
if
\begin{compactenum}[a)]
\item
$I_0$ generates a prime ideal $(I_0)$ of $K[X_j\colon j<n]$
such that the condition \eqref{eqn:I_0} in Theorem \ref{thm:acfa} holds, and
\item
when $S$ is the ring $\vr\bigl[I_0\cup\{X_{\ell}\colon\ell\in\lambda\}\bigr]$,
the ideal of $S$
generated by the set
$\maxi\cup I_0\cup\{X_k\colon k\in\kappa\}$
is proper, that is,
\begin{equation}\label{eqn:ideal}
\bigl(\maxi\cup I_0\cup\{X_k\colon k\in\kappa\}\bigr)S
\subsetneq S,
\end{equation}
\end{compactenum}
then $K$ contains a common solution to the system \eqref{eqn:df-sys} in Theorem \ref{thm:acfa}
and the system
\begin{equation}\label{eqn:cond}
\bigwedge_{\ell\in\lambda}X_{\ell}\in\vr
\land\bigwedge_{k\in\kappa}X_k\in\maxi.
\end{equation}
The case $m=0=\lambda$
ensures that $K$ is algebraically closed.

As the existentially closed difference-fields,
characterized by Theorem \ref{thm:acfa},
are just the models of a certain theory $\acfa$,
so the models of $\fav$ just described
are the models of a certain theory,
which we shall call $\fav^*$.
In particular,
the condition \eqref{eqn:ideal} is first-order.
Indeed, this condition means
there are no $g_f$ and $h_k$ in $S$ such that
\begin{equation}\label{eqn:gfhk}
\sum_{f\in I_0}g_f\cdot f+\sum_{k\in\kappa}h_k\cdot X_k\equiv 1\pmod{\maxi}.
\end{equation}
The ring $S$ is, for some subset $I_1$ of $I_0$,
isomorphic to the quotient of the polynomial ring
$\vr[\{Y_f\colon f\in I_1\}\cup\{X_{\ell}\colon\ell\in\lambda\}]$
by an element of bounded degree.
We can also work over the residue field $\vr/\maxi$, instead of $\vr$.
Thus, by \cite{MR739626},
for we can bound the degrees
of the $g_f$ and $h_k$ that would make \eqref{eqn:gfhk} true.

\begin{lemma}\label{lem:emb}
Every model of $\fav$ embeds in a model of $\fav^*$.
\end{lemma}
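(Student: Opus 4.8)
The goal is to embed an arbitrary model $(K,\sigma,\vr)$ of $\fav$ into a model of $\fav^*$. Following Porism~\ref{por:main} and the proof-of-embedding half of Robinson's Theorem~\ref{thm:Rob}, it suffices to show: for every system $\phi(\vec x,\vec z)$ of the shape appearing in the $\fav^*$-axioms (i.e.\ conjunctions of the form \eqref{eqn:df-sys} together with \eqref{eqn:cond}), and every tuple $\vec a$ from $K$, whenever the relevant $\theta$-condition holds in $(K,\sigma,\vr)$ — namely primeness of $(I_0)$ with \eqref{eqn:I_0}, together with the properness \eqref{eqn:ideal} — the system $\phi(\vec a,\vec z)$ is solvable in \emph{some} extension of $(K,\sigma,\vr)$ that is still a model of $\fav$. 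Then the standard chain argument (Zorn plus inductivity of $\fav$) produces a model of $\fav^*$ containing $(K,\sigma,\vr)$. So the entire content of the lemma is one extension construction.

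First I would handle the field-and-automorphism part exactly as in the proof of Theorem~\ref{thm:acfa}: since $(I_0)$ is prime in $K[X_j\colon j<n]$ and satisfies \eqref{eqn:I_0}, take a generic zero $\vec a=(a_j\colon j<n)$ of $(I_0)$ in some field extension of $K$; by Fact~\ref{fact:acfa} the map $a_i\mapsto a_{\tau(i)}$ ($i<m$) extends $\sigma$ to a field isomorphism $K(a_i\colon i<m)\to K(a_{\tau(i)}\colon i<m)$, and then (passing to an algebraic closure and iterating) to an automorphism, still called $\sigma$, of a field $L\supseteq K(\vec a)$. This gives a difference field $(L,\sigma)$ extending $(K,\sigma)$ in which \eqref{eqn:df-sys} is solved by $\vec a$. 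What remains — and this is the crux — is to equip $L$ with a valuation ring extending $\vr$ such that the chosen $\vec a$ \emph{also} satisfies \eqref{eqn:cond}, i.e.\ $a_\ell\invr$ for $\ell\in\lambda$ and $a_k\inmaxi$ for $k\in\kappa$; here the hypothesis \eqref{eqn:ideal} must be used, and one must make sure the valuation on $L$ genuinely extends the given one on $K$ (so the embedding is an $\fav$-embedding).

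The valuation-theoretic step goes as follows. Consider the subring $S=\vr[I_0\cup\{X_\ell\colon\ell\in\lambda\}]$ of $K[X_j\colon j<n]$, and its image $S[\vec a]$ under $X_j\mapsto a_j$; since $I_0$ vanishes at $\vec a$, this is just $\vr[a_\ell\colon\ell\in\lambda]$. The properness hypothesis \eqref{eqn:ideal} says precisely that the ideal generated by $\maxi\cup\{a_k\colon k\in\kappa\}$ in $\vr[a_\ell\colon\ell\in\lambda]$ is proper, hence is contained in some maximal ideal $\mathfrak n$ of that ring. Because $\mathfrak n\cap\vr\supseteq\maxi$ and $\maxi$ is maximal in $\vr$, we get $\mathfrak n\cap\vr=\maxi$; so $(\vr[a_\ell\colon\ell\in\lambda],\mathfrak n)$ dominates $(\vr,\maxi)$. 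Now invoke the Chevalley extension theorem (existence of a valuation ring of the fraction field dominating a given local subring): there is a valuation ring $\vr_L$ of $L$ with maximal ideal $\maxi_L$ such that $\maxi_L\cap\vr[a_\ell\colon\ell\in\lambda]=\mathfrak n$. Then $\vr_L\cap K=\vr$ (again by maximality of $\maxi$ in $\vr$), so $(L,\sigma,\vr_L)$ is a model of $\fav$ extending $(K,\sigma,\vr)$; for $\ell\in\lambda$ we have $a_\ell\in\vr[a_\ell\colon\ell\in\lambda]\subseteq\vr_L$, and for $k\in\kappa$ we have $a_k\in\mathfrak n\subseteq\maxi_L$, so \eqref{eqn:cond} holds at $\vec a$ as required. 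Finally, one records that the remaining trivial cases and clauses ($\Exists xx\notin\vr$, and the $m=0=\lambda$ clause forcing algebraic closure) are absorbed by further refinements and by the algebraic closure of $L$, and that one can also throw in the inequations $X_i\neq X_j$ as in Theorem~\ref{thm:acfa} without loss.

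**Main obstacle.**
The delicate point is the interaction between the two extension steps: one must choose the \emph{same} generic point $\vec a$ to witness \eqref{eqn:df-sys} and \eqref{eqn:cond} simultaneously, and then argue that the automorphism $\sigma$ extended to $L$ and the valuation $\vr_L$ need not commute in any way — which is exactly the point of $\fav$ ("no required interaction"), so in fact nothing needs to be checked there, but one must resist the temptation to impose compatibility. Concretely the real work is verifying that \eqref{eqn:ideal}, a statement about the specific ring $S$ built from $I_0$ and the coordinate variables, translates correctly after specialization $X_j\mapsto a_j$ into properness of an ideal in $\vr[a_\ell\colon\ell\in\lambda]$ — the possible collapse of $S$ onto $\vr[a_\ell\colon\ell\in\lambda]$ when the $a_j$ satisfy $I_0$ needs to be tracked carefully, together with the subtlety that $S$ is only a polynomial-type ring "over $\vr$" after quotienting by $I_1\subseteq I_0$ as the surrounding discussion of \eqref{eqn:gfhk} indicates. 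Once that bookkeeping is done, Chevalley's theorem and the maximality of $\maxi$ do the rest mechanically.
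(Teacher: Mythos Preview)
Your approach is essentially the paper's: first extend $\sigma$ to a difference field $(L,\sigma)$ using the generic zero $\vec a$ of $(I_0)$ exactly as in Theorem~\ref{thm:acfa}, then use \eqref{eqn:ideal} together with Chevalley to produce a valuation on $L$ extending $\vr$ in which $\vec a$ satisfies \eqref{eqn:cond}; the freedom of $\fav$ means no compatibility between $\sigma$ and the new valuation is needed. The structural outline, the choice of generic $\vec a$, and the appeal to Chevalley all match.

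The one substantive difference is the order of the valuation step, and this is exactly where your word ``precisely'' is too strong. You pass first to the specialization $S\twoheadrightarrow\vr[a_\ell\colon\ell\in\lambda]$ and then claim that properness of $J=(\maxi\cup I_0\cup\{X_k\})S$ is equivalent to properness of $(\maxi\cup\{a_k\})\vr[a_\ell]$. But the kernel of that specialization is $S\cap(I_0)K[\vec X]$, which can be strictly larger than $I_0\cdot S$ (for instance with $\vr=\mathbb Z_{(2)}$ and $I_0=\{X_0-X_1,\,X_0+X_1\}$ one has $X_0\in S\cap(I_0)$ yet $X_0\notin I_0\cdot S$), so the passage from properness upstairs to properness downstairs is not automatic and really is, as you say, the crux. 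The paper avoids this bookkeeping by reversing the order: it first invokes the Zariski--Samuel refinement of Chevalley inside the field $K(\vec X)$ itself, obtaining a valuation ring $\vr^*$ of $K(\vec X)$ with $S\subseteq\vr^*$ and $\maxi^*\cap S$ a prime containing $J$; only afterwards does it pass to $K(\vec a)$, reading off the desired valuation as the one induced by $\vr^*$ through the quotient by $(I_0)$. That ordering makes the containments $I_0\subseteq\maxi^*$, $\{X_k\}\subseteq\maxi^*$, $\{X_\ell\}\subseteq\vr^*$, and $\maxi^*\cap\vr=\maxi$ immediate, and pushes the delicate identification to the single step ``$\vr^*/(I_0)\vr^*$ is a valuation ring of $K(\vec a)$''. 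Your version and the paper's are thus two executions of the same idea, with the subtle specialization issue located in slightly different places; your flagging of it as the main obstacle is exactly right.
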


\begin{proof}
Let $(K,\sigma,\vr)$ be a model of $\fav$
such that,
\begin{compactenum}[1)]
\item
for some $m$ and $n$ in $\upomega$, where $m\leq n$,
\item
for some injective function $\tau$ from $m$ into $n$,
\item
for some finite subset $I_0$ of $\vr[X_j\colon j<n]$,
\item
for some sets $\lambda$ and $\kappa$, where $\kappa\included\lambda\included n$,
\end{compactenum}
we have that
\begin{compactenum}[a)]
\item
$I_0$ generates a prime ideal $(I_0)$ of $K[X_j\colon j<n]$
such that the condition \eqref{eqn:I_0} in Theorem \ref{thm:acfa} holds, and
\item
when $S$ is the ring $\vr\bigl[I_0\cup\{X_{\ell}\colon\ell\in\lambda\}\bigr]$,
then \eqref{eqn:ideal} holds.
\end{compactenum}
We already know,
as in the proof of Theorem \ref{thm:acfa},
that the system \eqref{eqn:df-sys}
has a solution $\vec a$
in a difference field $(L,\widetilde{\sigma})$
of which $(K,\sigma)$ is a substructure;
and we may require $\vec a$ to be a generic solution of the field-theoretic part
\begin{equation*}
\bigwedge_{f\in I_0}f=0
\end{equation*}
of the system.
We now show that $L$ has a valuation ring $\widetilde{\vr}$
such that
\begin{equation*}
K\cap\widetilde{\vr}=\vr
\end{equation*}
and $\vec a$ solves \eqref{eqn:cond}, that is,
\begin{equation}\label{eqn:a}
\bigwedge_{\ell\in\lambda}a_{\ell}\in\widetilde{\vr}
\land\bigwedge_{k\in\kappa}a_k\in\widetilde{\maxi}.
\end{equation}
We can do this by refining the proof
of Chevalley's theorem on extending valuations
(for which see \cite[Thm 3.1.1, p.\ 57]{Engler-Prestel}),
or simply by using a refinement
\cite[Thm 5, p.\ 12]{MR0120249}
of the theorem itself.
By this refinement,
the sub-ring $S=\vr[I_0\cup\{X_{\ell}\colon\ell\in\lambda\}]$ of $K(X_j\colon j<n)$
has a prime ideal $\mathfrak P$ that includes the proper ideal
generated by $\maxi\cup I_0\cup\{X_k\colon k\in\kappa\}$;
therefore some valuation ring $\vr^*$ of $K(X_j\colon j<n)$
with maximal ideal $\maxi^*$ satisfies
\begin{align*}
  S&\included\vr^*,&
  \mathfrak P&=\maxi^*\cap S.
\end{align*}
In particular,
\begin{align*}
	\{X_{\ell}\colon\ell\in\lambda\}&\included\starred{\vr},&
	I_0\cup\{X_k\colon k\in\kappa\}&\included\starred{\maxi},&
        \maxi^*\cap\vr&=\maxi.
\end{align*}
Now we can understand $\starred{\vr}/(I_0){\starred{\vr}}$
as a valuation ring of $K(\vec a)$.
By Chevalley's Theorem,
we can extend this valuation ring
to a valuation ring $\widetilde{\vr}$ of $L$.
In this case \eqref{eqn:a} holds.
\end{proof}

\begin{lemma}\label{lem:mc}
	$\fav^*$ is model complete.
\end{lemma}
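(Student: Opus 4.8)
The plan is to apply Robinson's Test. Since $\fav^*$ extends $\fav$, every model of $\fav^*$ is a model of $\fav$, and any inclusion $\str M\included\str N$ between models of $\fav^*$ is in particular an inclusion between models of $\fav$. Hence it is enough to prove that each model $\str M$ of $\fav^*$ is existentially closed among \emph{all} models of $\fav$: for then every existential sentence over $M$ that holds in some model of $\fav^*\cup\diag{\str M}$ already holds in $\str M$, so $\fav^*\cup\diag{\str M}$ is complete with respect to existential sentences, and Robinson's Test gives the model-completeness of $\fav^*$.

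So I would fix a model $\str M=(K,\sigma,\vr)$ of $\fav^*$, an extension $(L,\widetilde{\sigma},\widetilde{\vr})\models\fav$, and an existential sentence $\Exists{\vec z}\psi(\vec a,\vec z)$, with $\psi$ quantifier-free and $\vec a$ from $K$, that holds in $L$; the goal is to make it hold in $\str M$. As $\psi$ is a disjunction of systems, I may assume $\psi$ is a single system with a solution $\vec c$ in $L$. I would then refine $\psi$ to a system of the combined shape of \eqref{eqn:df-sys} and \eqref{eqn:cond}, exactly as in the proofs of Theorem \ref{thm:acfa} and Lemma \ref{lem:emb}: push $\sigma$ onto variables and constant terms; introduce auxiliary variables to rewrite each inequation $f\neq g$ as $(f-g)\cdot X=1$, and, by \eqref{eqn:notin-o} and \eqref{eqn:notin-m}, to rewrite each conjunct $X\notin\vr$ (respectively $X\notin\maxi$) as $X\cdot Y=1$ together with $Y\in\maxi$ (respectively $Y\in\vr$); rescale each polynomial equation by one of its coefficients of least valuation so that the generators $I_0$ lie in $\vr[X_j\colon j<n]$; and reduce to an injective $\tau$ as in Theorem \ref{thm:acfa}. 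The witness $\vec c$, together with the forced values of the auxiliary variables, solves the refined system in $L$.

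Next I would use this solution to secure hypotheses (a) and (b) from the definition of $\fav^*$. Enlarging $I_0$ by further equations satisfied by $\vec c$, I would take $(I_0)$ to be the prime ideal of the locus of $\vec c$ over $K$, with $\vec c$ generic; then \eqref{eqn:I_0} holds because $\widetilde{\sigma}$ restricts to the required isomorphism on $K(\vec c)$, by Fact \ref{fact:acfa}. For (b), the properness \eqref{eqn:ideal} holds because any relation of the form \eqref{eqn:gfhk} would, on evaluation at $\vec c$ in $(L,\widetilde{\vr})$, force $1\in\widetilde{\maxi}$. With (a) and (b) in force over $\str M$, the defining axiom of $\fav^*$ produces a solution of the conjunction of \eqref{eqn:df-sys} and \eqref{eqn:cond} in $K$ itself, hence a solution of $\psi$ in $\str M$; so $\Exists{\vec z}\psi(\vec a,\vec z)$ holds in $\str M$, as required.

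The main obstacle is the coordination carried out in the previous paragraph: I must exhibit a single witness $\vec c$ that is at once generic for the field-theoretic part (so that $(I_0)$ becomes prime and \eqref{eqn:I_0} holds) and compatible with the valuation conditions \eqref{eqn:cond}, and I must verify that \eqref{eqn:ideal} is exactly the algebraic obstruction to doing so --- the point already isolated, via the refinement of Chevalley's theorem, in Lemma \ref{lem:emb}. Checking that none of the refinement steps alters the solubility of $\psi$, and that imposing genericity does not disturb membership in $\widetilde{\vr}$ and $\widetilde{\maxi}$, is the delicate part of the argument.
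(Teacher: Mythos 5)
Your proposal is correct and follows essentially the same route as the paper's proof: Robinson's Test, then the refinement of an arbitrary soluble system into the conjunction of a system of form \eqref{eqn:df-sys} and one of form \eqref{eqn:cond}, using \eqref{eqn:notin-m} and \eqref{eqn:notin-o} to remove negated membership conditions, and finally the defining axioms of $\fav^*$ applied to the locus of the given solution. You additionally spell out the verification of hypotheses a) and b) (genericity of the witness yielding \eqref{eqn:I_0}, and evaluation at the witness ruling out a relation \eqref{eqn:gfhk}, hence giving \eqref{eqn:ideal}), which the paper's terser proof leaves implicit; note also that the genericity and valuation conditions cannot conflict, since the ideal is taken to be that of the witness itself, so the caution in your last paragraph is unnecessary.
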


\begin{proof}
We proceed as in the proof of Theorem \ref{thm:acfa}.
Over a model of $\fav$,
supposing a system of atomic and negated atomic formulas
has a solution $(a_i\colon i<k)$ from some larger model,
we transform the system into the conjunction of a system of the form \eqref{eqn:df-sys}
and a system of the form \eqref{eqn:cond}.
We proceed as before,
but now, since formulas $f\in\vr$ and $f\in\maxi$ and their negations may appear,
we can eliminate negations by applying \eqref{eqn:notin-m} and \eqref{eqn:notin-o},
and we can replace $f\in\vr$ and $f\in\maxi$ themselves with $X\in\vr\land f=X$
and $X\in\maxi\land f=X$ respectively,
where $X$ is a new variable.
\end{proof}

\begin{theorem}\label{thm:main}
$\fav^*$ is the model companion of $\fav$
and the model completion of the theory of models of $\fav$
whose fields are algebraically closed.
\end{theorem}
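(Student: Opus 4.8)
The plan is to assemble Theorem \ref{thm:main} from the two preceding lemmas together with the machinery of \S\ref{sect:mc}. First I would record that $\fav$ is inductive: the field axioms, the surjectivity-of-$\sigma$ axiom, and the valuation-ring axioms are all preserved under unions of chains (the value-group formulation in \S\ref{sect:fav} makes this transparent, since an increasing union of ordered abelian groups is again one, and the valuation ring of the union is the union of the valuation rings). Hence $\fav$ falls under the scope of Porism \ref{por:Rob} and Porism \ref{por:main}. Lemma \ref{lem:emb} gives half of the model-companion requirement: every model of $\fav$ embeds in a model of $\fav^*$. For the other half, I would note that $\fav^*$ is by construction an extension of $\fav$ (its axioms include those of $\fav$), so every model of $\fav^*$ is already a model of $\fav$; thus condition 1) in the definition of model-companion holds in both directions. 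Lemma \ref{lem:mc} supplies the model-completeness of $\fav^*$, which is condition 2). By uniqueness of the model-companion, $\fav^*$ is \emph{the} model-companion of $\fav$.

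Next I would identify $\fav^*$ as the model-\emph{completion} of the theory $\fav_{\mathrm{ACF}}$ of models of $\fav$ whose underlying fields are algebraically closed. Here I would invoke the criterion quoted in \S\ref{sect:mc}: given that $T_0 := \fav_{\mathrm{ACF}}$ has the model-companion $\fav^*$ (which follows since $\fav_{\mathrm{ACF}}$ and $\fav$ have the same existentially closed models — any e.c.\ model of $\fav$ is algebraically closed as a field, by the case $m=0=\lambda$ of the defining conditions of $\fav^*$, so the two theories share the class of e.c.\ models and hence the same model-companion), it suffices to check that $\fav_{\mathrm{ACF}}$ has the amalgamation property and that $\fav_{\mathrm{ACF}} \subseteq \fav^*$. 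The inclusion is immediate from the parenthetical remark ``the case $m=0=\lambda$ ensures that $K$ is algebraically closed''. For amalgamation: given algebraically closed valued difference fields $(K_1,\sigma_1,\vr_1)$ and $(K_2,\sigma_2,\vr_2)$ with common submodel $(K_0,\sigma_0,\vr_0)$, one amalgamates the difference-field structure (algebraically closed difference fields amalgamate — indeed $\acfa$, being the model-companion of a theory with the amalgamation property over algebraically closed difference fields, yields this, or one argues directly via linear disjointness over the algebraic closure of $K_0$ inside a common difference-closure) and then extends a valuation on the amalgam restricting to both $\vr_1$ and $\vr_2$ by Chevalley's theorem, exactly as in the proof of Lemma \ref{lem:emb}.

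The main obstacle I expect is the amalgamation step for the valuation: Chevalley's theorem extends a single valuation ring to a larger field, but here one must produce a valuation on the compositum $K_1 \cdot_{K_0} K_2$ that simultaneously induces $\vr_1$ on $K_1$ and $\vr_2$ on $K_2$, and these restrict to the \emph{same} $\vr_0$ on $K_0$. The standard remedy is to pass to the residue field and value group of $(K_0,\vr_0)$, use that $K_1$ and $K_2$ can be taken linearly disjoint over the relative algebraic closure of $K_0$, and apply the refinement of Chevalley's theorem used in Lemma \ref{lem:emb} (the one from \cite{MR0120249}) to the local ring of the amalgam lying over the maximal ideal $\maxi_0$; the compatibility with $\sigma$ is then automatic because $\sigma$ on the amalgam is the unique extension of $\sigma_1$ and $\sigma_2$. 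Once amalgamation is in hand the rest is bookkeeping, and the theorem follows.
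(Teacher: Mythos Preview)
Your treatment of the first clause (model-companion) matches the paper's: it simply cites Lemmas \ref{lem:emb} and \ref{lem:mc}.

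For the second clause (model-completion over algebraically closed fields) you take a genuinely different route. The paper does \emph{not} verify amalgamation for $\fav_{\mathrm{ACF}}$ directly. Instead it observes that, once the underlying field $K$ is algebraically closed, the solvability conditions attached to the systems \eqref{eqn:df-sys} and \eqref{eqn:cond} become \emph{quantifier-free} in the signature of $\fav$: condition (a) is purely field-theoretic and collapses by quantifier elimination in $\acf$ applied to $K$, while condition \eqref{eqn:ideal} is, after reduction modulo $\maxi$, a statement about the residue field (also algebraically closed) and likewise collapses. One is then exactly in the quantifier-free case of Porism \ref{por:Rob}, which yields a model-\emph{completion} rather than merely a model-companion. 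This is a two-line argument that never touches amalgamation of valuations.

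Your proposed route through the amalgamation property is in principle legitimate, but the step you flag as ``the main obstacle'' is a real gap, and your sketch does not close it. Chevalley's theorem and the Zariski--Samuel refinement used in Lemma \ref{lem:emb} extend a \emph{single} valuation (or place) to a larger ring; they do not, by themselves, produce a valuation on a compositum that restricts to two prescribed valuations $\vr_1$, $\vr_2$ simultaneously. Your phrase ``apply the refinement of Chevalley's theorem \ldots\ to the local ring of the amalgam lying over the maximal ideal $\maxi_0$'' does not specify which local ring, nor why its maximal ideal should meet each $\vr_i$ in exactly $\maxi_i$. One can make this work (for instance by showing that the ideal generated by $\maxi_1$ and $\maxi_2$ in the subring $\vr_1[\vr_2]$ of a linearly-disjoint compositum is proper, using that the residue field of $K_0$ is algebraically closed), but that argument is absent. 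The paper's approach buys you exactly the avoidance of this difficulty: amalgamation for $\fav_{\mathrm{ACF}}$ then follows \emph{a posteriori} from the model-completion, rather than being an input to it.
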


\begin{proof}
The first part is the content of Lemmas \ref{lem:emb} and \ref{lem:mc}.
When the underlying field is required to be algebraically closed,
then, by quantifier-elimination in the theory of such fields,
the conditions that the systems \eqref{eqn:df-sys} and \eqref{eqn:cond}
are to meet are given by a quantifier-free formula.
\end{proof}

\begin{theorem}\label{thm:opt}
The theory of models of $\acfa$ that also have valuations
is not the model companion of $\fav$.
\end{theorem}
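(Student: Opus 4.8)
The plan is to exploit the uniqueness of the model companion. By Theorem~\ref{thm:main}, $\fav^*$ is the model companion of $\fav$, and a model companion is unique when it exists; so if $\acfa\cup\acvf$ (the theory of those $(K,\sigma,\vr)\models\fav$ whose reduct $(K,\sigma)$ is a model of $\acfa$) were also a model companion of $\fav$, it would coincide with $\fav^*$ and hence have exactly the same models. Accordingly it suffices to exhibit a single model of $\acfa\cup\acvf$ that is not a model of $\fav^*$.

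To find a separating sentence, I would first note that $\fav^*$ decides a \emph{mixing} configuration, in which $\sigma$ carries an element of $\maxi$ out of $\vr$. Consider the system in variables $X_0,X_1,X_2$ given by $X_0\autom=X_1$ and $X_1\cdot X_2=1$, together with $X_0\inmaxi$ and $X_2\inmaxi$ (so that $X_1=X_0\autom\notin\vr$); here $\tau(0)=1$, $I_0=\{X_1X_2-1\}$, and $\kappa=\lambda=\{0,2\}$. One checks the two hypotheses in the definition of $\fav^*$: the ideal $(X_1X_2-1)$ is prime and satisfies \eqref{eqn:I_0}, since it meets neither $K[X_0]$ nor $K[X_1]$ nontrivially; and the ring $S=\vr[X_0,X_2,X_1X_2]$ is a polynomial ring in which the quotient of $S$ by the ideal generated by $\maxi\cup I_0\cup\{X_0,X_2\}$ is $\vr/\maxi\neq0$, so \eqref{eqn:ideal} holds. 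Hence this system is one of the instances required to be soluble by $\fav^*$, and therefore $\fav^*\proves\Exists x(x\inmaxi\land x\autom\notin\vr)$.

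Next I would build a model $(K,\sigma,\vr)$ of $\acfa\cup\acvf$ in which $\sigma$ maps $\vr$ onto $\vr$, hence $\maxi$ onto $\maxi$: then no $a\inmaxi$ can have $a\autom\notin\vr$, and the sentence above fails. The natural source is the class of multiplicative valued difference fields of \cite{MR2963021}, in which $\sigma$ acts on a divisible value group by $\gamma\mapsto\rho\gamma$ for a fixed $\rho\neq1$ (so $\sigma$ fixes $\vr$ setwise); one takes an existentially closed such field, whose underlying field is algebraically closed and whose valuation is nontrivial, so that $(K,\vr)\models\acvf$. The point needing care -- and \emph{the main obstacle} -- is to verify that the pure difference-field reduct $(K,\sigma)$ is itself a model of $\acfa$. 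By Theorem~\ref{thm:acfa} this means showing $(K,\sigma)$ is existentially closed as a difference field, which reduces to a valuation-extension lemma: every difference-field extension of $(K,\sigma,\vr)$ admits a compatible multiplicative valuation, so that the existentially closed multiplicative valued difference field already realizes every consistent difference type. For a generator with $a\autom=c\cdot a$ the constraint forces $v(a)=v(c)/(\rho-1)$, which divisibility of the value group makes available; the delicate part is to carry this out uniformly for arbitrary finite difference-algebraic configurations (equivalently, to build such a model directly by an existentially closed construction that preserves $\sigma(\vr)=\vr$ at every stage).

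Granting the construction, the conclusion is immediate. The model $(K,\sigma,\vr)$ satisfies $\acfa\cup\acvf$ but, because $\sigma(\maxi)=\maxi\included\vr$, falsifies $\Exists x(x\inmaxi\land x\autom\notin\vr)$, so it is not a model of $\fav^*$. Thus $\acfa\cup\acvf$ and $\fav^*$ do not have the same models, and since any model companion of $\fav$ must equal $\fav^*$, the theory $\acfa\cup\acvf$ is not the model companion of $\fav$. Equivalently, this model embeds by Lemma~\ref{lem:emb} into a model of $\fav^*$ in which the mixing element does exist, and that embedding is not elementary; so $\acfa\cup\acvf$ is not even model complete.
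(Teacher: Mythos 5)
Your overall strategy is the same as the paper's: since the model companion is unique and Theorem~\ref{thm:main} identifies it as $\fav^*$, it suffices to produce a single model of $\acfa\cup\acvf$ that is not a model of $\fav^*$, separated by an existential ``mixing'' sentence. Your first half is correct, and in fact more explicit than the paper's: the instance $I_0=\{X_1X_2-1\}$, $\tau(0)=1$, $\kappa=\lambda=\{0,2\}$ does satisfy conditions a) and b) in the definition of $\fav^*$, so $\fav^*\proves\Exists x(x\inmaxi\land x\autom\notin\vr)$, whereas the paper merely asserts the solubility of $x\in\maxi\land x\autom\notin\maxi$ in models of $\fav^*$ without exhibiting the witnessing instance.

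The gap is in the second half, and you have named it yourself: you never actually produce a model of $\acfa\cup\acvf$ in which $\sigma$ preserves $\vr$ setwise, and ``granting the construction'' concedes exactly the content of the theorem. Your proposed route---take an existentially closed multiplicative valued difference field in the sense of \cite{MR2963021} and show that its difference reduct is a model of $\acfa$ by a valuation-extension lemma (``every difference-field extension admits a compatible multiplicative valuation'')---is not a routine verification: checking that lemma uniformly for arbitrary difference-algebraic configurations is precisely the hard part, it is nowhere supplied by the paper or by the results you invoke, and its truth is of the same order of difficulty as the deep theorems in this area. The paper sidesteps all of this with a citation: by Hrushovski \cite{2004math......6514H}, any nonprincipal ultraproduct $\prod_p\bigl(\F_p(T)\alg,x\mapsto x^p,\vr_T\bigr)/\mathscr U$ of Frobenius difference fields with $T$-adic valuations is a model of $\acfa$; since the Frobenius satisfies $\val(x\autom)=p\val(x)$, each factor, hence the ultraproduct, satisfies $\Forall x\bigl(x\in\maxi\lto x\autom\in\maxi\bigr)$, so this structure is a model of $\acfa\cup\acvf$ falsifying your separating sentence. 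If you replace your unproven construction by this ultraproduct, your argument closes and coincides with the paper's proof.
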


\begin{proof}
We show that there is a model $(K,\sigma,\vr)$
of $\fav$ that is not a model of $\fav^*$,
although the reduct $(K,\sigma)$ is a model of $\acfa$.

It is known \cite{2004math......6514H}
that every nonprincipal ultraproduct
of the algebraic closures of the fields of prime order,
each equipped with its Frobenius automorphism,
is a model of $\acfa$.
Now let
\begin{equation*}
(K,\sigma,\vr)=\prod_p\bigl(\F_p(T)\alg,x\mapsto x^p,\vr_T\bigr)/\mathscr U
\end{equation*}
for some nonprincipal ultrafilter $\mathscr U$ on the set of primes
and some valuation ring $\vr_T$ of each $\F_p(T)$.
For example,
$\vr_T$ might be the $T$-adic valuation ring,
consisting of those elements of $\F_p(T)$ that, considered as functions of $T$,
are well-defined at $0$.
The structure is as desired since 
by \eqref{eqn:val+} it satisfies
\begin{equation*}
 \Forall x\bigl(\val(x)>0\lto\val(x\autom)>0\bigr),
\end{equation*}
that is,
$\Forall x\bigl(x\in\maxi\lto x\autom\in\maxi\bigr)$,
while in every model of $\fav^*$ the system
\begin{equation*}
x\in\maxi\land x\autom\notin\maxi
\end{equation*}
is soluble.
\end{proof}


\end{document}